\newtheorem{theorem}{Theorem}[section]
 \newtheorem{proposition}[theorem]{Proposition}
 \theoremstyle{definition}
 \newtheorem{definition}[theorem]{Definition}
 \theoremstyle{remark}
 \numberwithin{equation}{subsection}
\newcommand{\C}{\mathbb{C}}
\newcommand{\E}{\underline{E}}
\newcommand{\F}{\mathcal{F}}
\newcommand{\HH}{\mathbb{H}}
\newcommand{\K}{\mathbb{K}}
\newcommand{\QQ}{\mathcal{Q}}
\newcommand{\R}{\mathbb{R}}
\renewcommand{\S}{{S}}  
\newcommand{\Z}{\mathbb{Z}}
\newcommand{\Hom}{\text{Hom}}
\newcommand{\pt}{\text{pt}}
\newcommand{\catname}[1]{{\normalfont\textbf{#1}}}
\newcommand{\Or}{{\catname{Or}_\F(G)}}
\DeclarePairedDelimiter\floor{\lfloor}{\rfloor}
\DeclarePairedDelimiter\ceil{\lceil}{\rceil}
\def\blfootnote{\xdef\@thefnmark{}\@footnotetext}
\begin{document}

\title{The equivariant $K$- and $KO$-theory of certain classifying spaces via an equivariant Atiyah-Hirzebruch spectral sequence}


\author{Mario Fuentes}



\maketitle

\begin{abstract}
We compute the $K$- and $KO$-theory for the classifying $G$-spaces $\underline EG$ for proper actions of certain infinite discrete groups $G$ via the equivariant Atiyah-Hirzebruch spectral sequence.
\end{abstract}

\section*{Introduction}\label{intro}

\blfootnote{2010 {\em Mathematics Subject Classification}. Primary 19L47, 55N91. Secondary: 55R91, 20F65, 20F55.}
\blfootnote{{\em Key words}. Classifying spaces for proper actions, equivariant $K$-theory, equivariant $KO$-theory, amalgamated product of finite cyclic groups, Coxeter groups.}

The classifying space $\underline EG$ of proper actions of a given discrete group 
has recently become increasingly important. It is a $G$-CW complex whose isotropy groups are finite subgroups of $G$ and such that the fixed point set of finite subgroups of $G$ are weakly contractible spaces.  The equivariant $K$- and $KO$-theory of these classifying spaces were first studied by Lück and Oliver \cite{WL} and they are closely related to the Baum-Connes Conjecture (see \cite{BCH94} for the original reference or \cite{Apa} for an extended survey about this topic). In this paper we compute $K_G^*(\E G)$ and $KO^*_G(\E G)$  whenever $G$ is an amalgamated product of finite cyclic groups or certain Coxeter groups, see Theorems \ref{amalgamado}, \ref{cox1}, \ref{non1} and \ref{non2} for the precise statements. 

To this end, we first describe in Section 1 the equivariant Atiyah-Hirzebruch spectral sequence which relates the Bredon cohomology introduced in \cite{WL} with the equivariant $K-$ and $KO$-theory. 
The Bredon cohomology of various spaces is attracting the attention of researchers (see for example 
\cite{Bar}, for a complete survey).
For each one of the studied groups, we present suitable models for their classifying spaces of proper actions which let us compute the Bredon cohomology and the equivariant $K$- and $KO$-theory.

The results in this paper are by no means exhaustive and just illustrate how the used method may serve as an algorithm for similar computations. However, the examples studied are of interest: the family of amalgamated products of finite cyclic groups contains some classical groups as the infinite dihedral group  $D_\infty\cong \Z_2\ast \Z_2$, the special linear group $SL_2(\Z)\cong \Z_6\ast_{\Z_2}\Z_4$ and the projective special linear group $PSL_2(\Z)\cong \Z_3\ast\Z_2$. In the case of Coxeter groups \cite{Coxetergroups}, their relation with the Baum-Connes Conjecture of such groups have been studied, for example, in \cite{CoxeterRef1} or \cite{CoxeterRef2}. Although a general method for computing $K^*_G(\E G)$ and $KO^*_G(\E G)$ for $G$ a Coxeter group is provided, only a few infinite Coxeter groups present such a simple model that we are able to perform calculations in practice.

\begin{center} ACKNOWLEDGMENTS \end{center}
I would like to express my sincere gratitude to Irakli Patchkoria and Wolfgang Lück for their guidance.

\section{Preliminaries and the equivariant Atiyah-Hirzebruch spectral sequence}

Throughout this text $G$ is a discrete group. Here, we briefly recall the main facts about equivariant $K$-theory for finite proper $G$-CW-complexes developed in \cite{WL}. 

A $G$-space $X$ is a {\em $G$-CW-complex} if there is a filtration $\emptyset=X_{-1}\subset X_{0}\subset \ldots \subset X_n\subset\dots$ such that $X$ is the weak union of the spaces $X_n$ and each $X_{n}$ is obtained from $X_{n-1}$ by attaching some orbits of cells. That is, there are sets $\{H_j \le G\}_{j\in J_n}$ of subgroups of $G$ and attaching maps $q_j:G/H_j\times \S^{n-1}\to X_{n-1}$ such that

$$\begin{tikzcd}
\bigsqcup_{j\in J_n} G/H_j\times \S^{n-1} \arrow{r}{\bigsqcup_{j\in J_n} q_j} \arrow{d} & X_{n-1}\arrow{d}\\
\bigsqcup_{j\in J_n} G/H_j\times D^n \arrow{r}{\bigsqcup_{j\in J_n} Q_j} & X_{n}\\
\end{tikzcd}$$
is a pushout square. If $A\subset X$ is a $G$-invariant subcomplex, then $(X, A)$ is a {\em $G$-CW-pair}. A $G$-CW-complex is {\em finite} if it has finitely many orbits of cells and it is {\em proper} if its isotropy groups are finite.

A {\em $G$-vector bundle} over a a $G$-CW-complex $X$ is a complex vector bundle $p\colon E\to X$  where the total space $E$ is a $G$-space, $p$ is $G$-equivariant and each $g\in G$ acts as a bundle isomorphism on $E$.  If the space $X$ is a point, a $G$-vector bundle over $X$ is just a linear representation of the group $G$.

The {\em equivariant $K$-theory} of $X$ is defined as follows: $K_G(X)=K^0_G(X)$ is the Grothendieck group of the monoid of isomorphism classes of $G$-vector bundles over $X$. If $n>0$,
  $$K^{-n}_G(X)=\ker(K^0_G(X\times \S^n) \xrightarrow{\text{incl}^*} K^0_G(X))$$
 where the $G$-action on $\S^n$ is imposed to be trivial. For $(X,A)$ a proper $G$-CW-pair,
  $$K_G^{-n}(X,A)=\ker(K_G^{-n}(X\cup_AX)\xrightarrow{i_2^*}K_G^{-n}(X)).$$
 In positive degree, the equivariant $K$-theory is defined using  Bott periodicity. If $G=\{e\}$, then $K^*_G=K^*$ is the ordinary non-equivariant $K$-theory.

 \cite[Theorem 3.2]{WL} asserts that  $K_G^*$ is an equivariant cohomology theory: it is a $G$-homotopy invariant contravariant functor, from the category of finite proper $G$-CW-pairs to the category of graded abelian groups satisfying Mayer-Vietoris, excision, exactness. Moreover, it also satisfies Bott periodicity of order 2, i.e. $K_G^{-n}(X,A)\cong K_G^{-n-2}(X,A)$.

For $H<G$ a finite subgroup, we have a natural isomorphism $K^0_G(G/H)\cong R(H)$, where $R(H)$ is the (complex) representation ring of the group $H$ and $K^1_G(G/H)=0$. 

All of the above can be redone using \textit{real} (instead of complex) $G$-vector bundles to obtain $KO^*_G$, the real equivariant $K$-theory. In the real case, Bott periodicity is of order 8, $KO^n_G(X)\cong KO_G^{n-8}(X)$, and for any finite subgroup $H<G$, $KO^*_G(G/H)\cong KO^*_H(\pt)$. In particular, $KO^0_G(G/H)\cong RO(H)$, where $RO(H)=R_\R(H)$ is the representation ring over $\R$.

We now recall the  {\em Bredon cohomology} introduced in  \cite[Sec.\ 2]{WL}. Let $\mathcal F$ be the a family of subgroups of $G$ (in our examples $\mathcal F$ will be the family of finite subgroups of $G$) and let $M$ be a contravariant functor from $\Or$, the orbit category associated to $\mathcal F$, to $\textbf{Ab}$, the category of abelian groups.

For a given $G$-CW-complex, denote $C_n(X): \Or^{\operatorname{op}} \to \textbf{Ab}$ the contravariant functor which sends an orbit $G/H$ to the free abelian group with one generator for each $n$-cell in $X^H$, i.e. the fixed points under the action of $H$. A morphism $G/H\to G/K$ is sent by $C_n(X)$ to the induced inclusion or permutation of cells (note that a map in the orbit category is generated by inclusion of subgroups and conjugations).

The {\em Bredon cohomology} of $X$ with coefficients in $M$ is defined as the homology of the cochain complex
$$
0\rightarrow \Hom_\Or({C_0}(X),M)\xrightarrow{\delta}
\Hom_\Or({C_1}(X),M) \xrightarrow{\delta}
\Hom_\Or({C_2}(X),M)\rightarrow \cdots
$$
where the morphism $\delta$ is induced by the boundary map of the cellular structure.

Finally, 
 we present a  version of the equivariant {\em Atiyah-Hirzebruch spectral sequence} whose $E_2$-term is the {\em Bredon cohomology}  recalled above. This result can be obtained as  a particular instance of \cite[Thm.\ 4.7(2)]{Assembly}.

\begin{theorem}\label{Spectral Sequence Theorem}
Given $X$ a finite proper $G$-CW-complex, and $h^*$ a $G$-equivariant cohomology theory, there is a  cohomological spectral sequence whose $E_2$-page is $H_G^p(X; h^q)$ which converges to $h^{p+q}$:
$$
E^{p,q}_2=H^p_G(X;h^q)\implies h^{p+q}(X).
$$
\end{theorem}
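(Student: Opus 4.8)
The plan is to construct the Atiyah-Hirzebruch spectral sequence from the skeletal filtration of the $G$-CW-complex $X$, following the classical approach of \cite{AH} but working entirely within the category of finite proper $G$-CW-complexes to avoid the non-properness of quotients mentioned in the preamble. First I would set up the exact-couple machinery. Let $X_0\subset X_1\subset\cdots\subset X_N=X$ be the skeletal filtration by orbits of cells. Applying the equivariant cohomology theory $h^*$ to the pairs $(X_p,X_{p-1})$ yields, for each pair, long exact sequences that assemble into an exact couple with $D_1^{p,q}=h^{p+q}(X_p)$ and $E_1^{p,q}=h^{p+q}(X_p,X_{p-1})$. The connecting maps of the triples $(X_p,X_{p-1},X_{p-2})$ provide the first differential, and the standard derived-couple construction then produces the spectral sequence $\{E_r^{p,q},d_r\}$.

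The key computational step is the identification of the $E_1$- and $E_2$-pages. Here I would exploit the pushout description of $X_p$ from $X_{p-1}$: the relative term $h^{p+q}(X_p,X_{p-1})$ should, by excision and the suspension/exactness axioms of $h^*$ (all available by Theorem 3.2 of \cite{WL} as recalled above), reduce to a contribution of the orbit cells attached in degree $p$. Since these are copies of $G/H_j\times D^p$ attached along $G/H_j\times \S^{p-1}$, I expect
\begin{equation}\label{e1identification}
E_1^{p,q}=h^{p+q}(X_p,X_{p-1})\cong \bigoplus_{j\in J_p} h^q(G/H_j),
\end{equation}
matching the description of the Bredon cochain groups recalled around \eqref{ecu1}. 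The crux is then to check that under this identification the first differential $d_1\colon E_1^{p,q}\to E_1^{p+1,q}$ coincides with the Bredon coboundary $\delta$ described explicitly above in terms of the incidence numbers $\alpha_{j,k}$ and the orbit-category maps $M(f_{k,j})$, with $M=h^q$. Granting this, the $E_2$-page is precisely $E_2^{p,q}=H_G^p(X;h^q)$ as claimed.

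Convergence is the part I would handle last, and it is the least delicate here because $X$ is finite: the skeletal filtration is finite, so for each fixed total degree $n$ the filtration of $h^n(X)$ has only finitely many steps. The spectral sequence therefore converges strongly, collapsing at a finite page, and the associated graded of $h^{p+q}(X)$ with respect to the skeletal filtration is $E_\infty^{p,q}$; this gives the stated convergence $E_2^{p,q}\Rightarrow h^{p+q}(X)$.

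The main obstacle will be the identification \eqref{e1identification} together with the compatibility of $d_1$ with the Bredon coboundary $\delta$. The difficulty is exactly the technical point flagged before the statement: the usual non-equivariant argument collapses $X_p/X_{p-1}$ to a wedge of spheres, but such quotients of a proper finite $G$-CW-complex need not be proper, so I cannot simply feed them back into $h^*$. To get around this I would avoid forming $X_p/X_{p-1}$ and instead work directly with the relative groups $h^{p+q}(X_p,X_{p-1})$ via the relative definition $h^{-n}_G(Y,A)=\ker\bigl(h^{-n}_G(Y\cup_A Y)\to h^{-n}_G(Y)\bigr)$ recalled in the preliminaries, computing them through the pushout square and the Mayer-Vietoris/excision axioms applied to the cofiber data $G/H_j\times(D^p,\S^{p-1})$. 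Tracking the attaching maps $q_j$ through these axioms is what forces the incidence numbers $\alpha_{j,k}$ and the induced orbit-category morphisms $M(f_{k,j})$ to appear in $d_1$, reproducing the explicit formula $\delta_k(\lambda_j)=\alpha_{j,k}M(f_{k,j})(\lambda_j)$ stated above and thereby matching $E_2$ with the Bredon cohomology $H_G^p(X;h^q)$.
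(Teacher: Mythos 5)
Your proposal follows essentially the same route as the paper's proof: the exact couple from the skeletal filtration, the identification $h^{p+q}(X_p,X_{p-1})\cong\bigoplus_{j\in J_p}h^q(G/H_j)$ via excision and Mayer--Vietoris applied to the pushout squares (deliberately avoiding the non-proper quotients $X_p/X_{p-1}$), the matching of $d_1$ with the Bredon coboundary $\delta$ by tracking the attaching maps, and strong convergence from finiteness of the filtration. The only difference is one of completeness: the paper executes the $d_1=\delta$ identification via an explicit commutative diagram of maps $\varphi_{\ell,k}$, whereas you correctly flag this as the crux but leave it as a plan.
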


\section{$K$- and $KO$-theory of  classifying spaces for proper actions}

We begin with the following.

\begin{definition}\label{def1}\cite[Def.\ 1.8]{Survey} Let
 $\F$ be a family of subgroups of $G$ which is closed under conjugation and finite intersections.
 The {\em classifying $G$-CW-complex for $\F$}  is   a $G$-CW-complex $E_\F(G)$ whose  isotropy groups belong to $\F$ and  it is universal with that property. That is, for any $G$-CW-complex $Y$ whose  isotropy groups belong to $\F$ there is only one $G$-map $Y\to E_\F(G)$ up to $G$-homotopy.

For a given family $\F$, its classifying space exists and it is characterized by the fact that all its isotropy subgroups lie in $\F$ and, for each $H\in \F$, the $H$-fixed point set is weakly contractible \cite[Thm.\ 1.9]{Survey}.

When $\F$ is the family of finite subgroups of $G$ denote $E_\F(G)$ by $\underline{E}G$  and call it the {\em classifying $G$-CW-complex for proper $G$-actions}.

\end{definition}

 When $G$ is torsion free, $\underline{E}G$ coincides with $EG$ 
 but in general they differ.
Using the Atiyah-Hirzebruch spectral sequence as the main tool, we plan to compute the equivariant $K$- and $KO$-theory of $\underline{E}G$ for some discrete groups $G$. The relevance of this lies, for instance, in the Baum-Connes Conjecture \cite{BCH94} which links a certain dual of $K_G^*(\underline{E}G)$ with the $K$-theory of the reduced $C^*$-algebra of $G$, see also  \cite{Bestvina} and \cite[Sec.\ 7.1]{Survey}.

\subsection{Amalgamated products of finite cyclic groups}

Here we prove:

\begin{theorem}\label{amalgamado} Let $G$ be the amalgamated product $\Z_{r_1m_0} \ast_{\Z_{r_1}} \Z_{r_1r_2m_1} \ast_{\Z_{r_2}}\cdots \ast_{\Z_{r_k}}\Z_{r_km_k}$ for some positive integer numbers $r_1,\dots r_k, m_0,m_1,\dots,m_k$. Then:
$$K^{-n}_{G}(\E G)=\left\{ \begin{array}{ll}
\Z^{\sigma}, & \text{ if } n \text{ is even,}\\
0, & \text{ if } n \text{ is odd,}\\
\end{array}\right.$$ where $\sigma=\sum_{i=0}^k m_ir_{i-1}r_i - \sum_{i=1}^k r_i$ with $r_0=1=r_{k+1}$.

In the real case, if $r_i$ are all odd,
$$KO^{-n}_{G}(\E G)=\left\{ \begin{array}{cl}
\Z^\sigma, & \text{ if } n\equiv 0,4 \text{ (mod 8),}\\
(\Z_2)^\omega, & \text{ if } n\equiv 1 \text{ (mod 8),}\\
(\Z_2)^\omega\oplus \Z^\theta, & \text{ if } n\equiv 2 \text{ (mod 8),}\\
\Z^\theta, & \text{ if } n\equiv 6 \text{ (mod 8,)}\\
0, & \text{ if } n\equiv 3,5,7 \text{ (mod 8),}\\
\end{array}\right.$$
where,

$$ \sigma=\sum_{i=0}^k \floor{\frac{m_ir_ir_{i+1}}{2}} -\frac{1}{2}\sum_{i=1}^k r_i +\frac{1}{2}k+1,$$

$$\omega= \frac{1}{2} \sum_{i=0}^k (-1)^{m_i} +\frac{k}{2}+\frac{3}{2},$$

$$ \theta=\sum_{i=0}^k \ceil{\frac{m_ir_ir_{i+1}}{2}} -\frac{1}{2}\sum_{i=1}^k r_i -\frac{k}{2}-1.$$

\end{theorem}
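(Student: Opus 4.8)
The plan is to model $\E G$ by the Bass--Serre tree $T$ associated to the amalgam. Since the vertex and edge stabilisers are the finite cyclic factors $V_i$ and amalgamating subgroups $E_i$, the tree $T$ is a proper $G$-CW-complex; being a tree it is contractible, as are all its finite-subgroup fixed-point sets, so $T$ is a model for $\E G$. The quotient $T/G$ is the linear graph of groups $V_0-E_1-V_1-\cdots-E_k-V_k$ with $V_i\cong\Z_{m_ir_ir_{i+1}}$ (putting $r_0=r_{k+1}=1$) and $E_i\cong\Z_{r_i}$; hence there are $k+1$ orbits of $0$-cells, $k$ orbits of $1$-cells, and $T$ is $1$-dimensional.

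Feeding $T$ and $h^*\in\{K_G^*,KO_G^*\}$ into Theorem~\ref{Spectral Sequence Theorem}, the $E_2$-page lives in the columns $p=0,1$, so every differential $d_r$ with $r\ge2$ vanishes and the sequence collapses, giving for each $n$ a short exact sequence
$$0\to H^1_G(\E G;h^{-n-1})\to h^{-n}(\E G)\to H^0_G(\E G;h^{-n})\to0.$$
By (\ref{ecu1}) the complex computing $H^*_G(\E G;h^q)$ is
$$0\to\bigoplus_{i=0}^k h^q(G/V_i)\xrightarrow{\ \delta\ }\bigoplus_{i=1}^k h^q(G/E_i)\to0,$$
with $\delta$ assembled from the differences of the two restrictions induced by $E_i\le V_{i-1},V_i$. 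For $h=K_G$ one has $h^q(G/H)=R(H)$ for $q$ even and $0$ for $q$ odd; the restrictions $R(V_i)\to R(E_i)$ of cyclic representation rings are onto, so a telescoping argument (put $\lambda_0=0$ and solve for the $\lambda_i$ recursively) makes $\delta$ surjective. Thus $H^1_G=\coker\delta=0$, and $K^{-n}_G(\E G)\cong\ker\delta$ is free of rank $\sum_{i=0}^k|V_i|-\sum_{i=1}^k|E_i|=\sigma$ for $n$ even and $0$ for $n$ odd.

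For $h=KO_G$ I would first record the structure of $KO^{-n}_{\Z_N}(\pt)$. Splitting $RO(\Z_N)$ into its real-type ($1$-dimensional) and complex-type ($2$-dimensional) irreducibles, the real-type summands contribute the coefficients $KO^{-n}(\pt)$ — that is $\Z,\Z_2,\Z_2,0,\Z,0,0,0$ for $n\equiv0,\dots,7\pmod8$ — while the complex-type summands contribute the pattern of $K^{-n}(\pt)$ ($\Z$ for $n$ even, $0$ for $n$ odd). Hence the only torsion is a $\Z_2$ when $n\equiv1,2\pmod8$, and the free ranks are $\operatorname{rk}RO(\Z_N)=\floor{N/2}+1$ for $n\equiv0,4$ and $b_N:=\ceil{N/2}-1$ for $n\equiv2,6$. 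As every $r_i$ is odd, the edge groups carry no sign representation, so on the free coefficients the restrictions stay onto and on the $\Z_2$-coefficients the trivial-representation generators already surject; the telescoping argument again gives $H^1_G=0$ for all $n$, so $KO^{-n}_G(\E G)\cong\ker\delta$ and no extension remains to resolve.

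It remains to compute $\ker\delta$ according to $n\bmod8$. For $n\equiv0,4$ the free kernel has rank $\sigma$; for $n\equiv2,6$ only the complex-type coefficients $b_N$ survive and give a free kernel of rank $\theta$; and the $\Z_2$-complex $\bigoplus_i\Z_2^{a_{N_i}}\to\bigoplus_i\Z_2$, with $a_{N_i}\in\{1,2\}$ according as $m_i$ is odd or even, has kernel of dimension $1+\#\{i:m_i\text{ even}\}=\omega$, producing the torsion for $n\equiv1,2$; for $n\equiv3,5,7$ the coefficients vanish. Matching these contributions against the collapsed sequence yields the stated groups. The main obstacle is exactly this structural input for $KO^{-n}_{\Z_N}(\pt)$: justifying the real-type/complex-type splitting, locating the $\Z_2$'s, and verifying that the restrictions respect it and remain surjective on the torsion. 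This is where the hypothesis that all $r_i$ are odd is essential, since it forces the edge groups to be $2$-torsion-free and the sign representations of the even vertex groups to restrict to the trivial one. Granting this, the remaining bookkeeping — surjectivity of $\delta$ and the rank and dimension counts for $\sigma,\theta,\omega$ — is routine.
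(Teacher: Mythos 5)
Your proposal is correct and follows essentially the same route as the paper: the Bass--Serre tree as a model for $\E G$ (which the paper justifies in Proposition \ref{Theorem Tree Model} rather than citing), the two-term Bredon complex with surjective restriction maps on (real) representation rings handled by the same telescoping argument, the resulting collapse of the spectral sequence of Theorem \ref{Spectral Sequence Theorem}, and Segal's real/complex/quaternionic splitting with the oddness of the $r_i$ eliminating the sign-representation obstruction. Your rank and $\Z_2$-dimension counts for $\sigma$, $\theta$, $\omega$ agree with the stated formulas, so no gap remains.
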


Before proving the theorem, we use that following proposition which gives a suitable model for $\E G$.
We say that a group acts on a graph without inversions if no edge is reversed by any element of the group. The following proposition is a direct consequence of \cite[Theorem 4.7]{Survey}

\begin{proposition}\label{Theorem Tree Model}
Suppose that $G$ acts on a tree $X$ without inversions and the isotropy group of any vertex is finite. Then $X$ is a model for $\E G$.
\end{proposition}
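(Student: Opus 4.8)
The plan is to verify directly the two characterizing properties of a model for $\E G$ recorded in Definition \ref{def1}: that every isotropy group lies in the family $\F$ of finite subgroups, and that for every finite subgroup $H\le G$ the fixed point set $X^H$ is weakly contractible. First I would set up the $G$-CW structure. A tree is a $1$-dimensional CW-complex with the vertices as $0$-cells and the open edges as $1$-cells, and since $G$ acts \emph{without inversions} it permutes these cells while the stabilizer of each cell fixes it pointwise; hence $X$ inherits the structure of a $1$-dimensional $G$-CW-complex. The isotropy of any vertex is finite by hypothesis, and the isotropy of an interior point of an edge is contained in the intersection of the stabilizers of the two endpoints (absence of inversions forces an edge-stabilizing element to fix each endpoint rather than swap them), so it is finite as well. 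This shows $X$ is a proper $G$-CW-complex and settles the isotropy condition of Definition \ref{def1}.

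The substance of the argument is the fixed point condition. The crucial step is to prove that $X^H\neq\emptyset$ for every finite subgroup $H\le G$, i.e.\ that each finite subgroup fixes a point of $X$. This is the classical fixed point theorem for finite groups acting on trees \cite{Serre}: choosing any vertex $v$, its orbit $Hv$ is finite and therefore a bounded subset of $X$, and a bounded subset of a tree possesses a canonical center which must be preserved by every isometry stabilizing the subset; thus $H$ fixes this center. Should the center be the midpoint of an edge, the absence of inversions forces $H$ to fix the two endpoints as well, so one may take the fixed point to be a vertex, giving $X^H\neq\emptyset$. I expect this fixed point theorem to be the only genuine obstacle, since it is the single non-formal input; the remainder of the proof is bookkeeping built on it.

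Once $X^H\neq\emptyset$ is established, I would invoke Corollary \ref{cor1} with $S=H$ to conclude that $X^H$ is a subtree of $X$. A nonempty tree is contractible, hence in particular weakly contractible, so the fixed point condition of Definition \ref{def1} holds for every finite subgroup. Combining this with the isotropy computation of the first paragraph verifies both defining properties, and therefore $X$ is a model for $\E G$, completing the proof.
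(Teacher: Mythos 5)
Your proof is correct, and it reaches the one genuinely nontrivial point --- that $X^H\neq\emptyset$ for every finite subgroup $H$ --- by a different route than the paper. You invoke the classical bounded-orbit argument from Serre: the orbit $Hv$ of a vertex is finite, hence bounded, a bounded subtree has a canonical center preserved by every automorphism stabilizing it, and absence of inversions upgrades a fixed edge-midpoint to a fixed vertex. The paper instead rebuilds this from scratch: it first observes that a finite group contains no hyperbolic elements, so each $X^h$ is a nonempty subtree; it then shows that two elements $g,h$ with $X^g\cap X^h=\emptyset$ force $\langle gh\rangle$ to be infinite (a translation-length computation along the geodesic between the two fixed subtrees), so the fixed sets pairwise intersect; and finally it uses a Helly-type property of subtrees (pairwise intersecting subtrees of a tree have a common point, since an empty triple intersection would produce a loop) to conclude $X^H\neq\emptyset$. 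Both arguments are sound. Yours is shorter and leans on a standard citable result, which is perfectly legitimate; the paper's is more self-contained and in effect re-proves that result via the Helly property rather than via the center construction. The remaining steps --- the isotropy computation using the no-inversions hypothesis to identify edge stabilizers with intersections of vertex stabilizers, and the appeal to Corollary \ref{cor1} plus contractibility of trees --- coincide with the paper's. One small remark: Corollary \ref{cor1} takes $X^S\neq\emptyset$ as a hypothesis, and you correctly establish nonemptiness before applying it, so the logical order of your argument is right.
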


\begin{proof}[Proof of Theorem \ref{amalgamado}]
We first find a tree in which the group $G=\Z_{r_1m_0} \ast_{\Z_{r_1}} \Z_{r_1r_2m_1} \ast_{\Z_{r_2}}\cdots
\ast_{\Z_{r_k}}\Z_{r_km_k}$  acts without inversion and with finite isotropy groups at vertices.
For it, we follow the algorithm given in the  fundamental theorem of the Bass-Serre theory (see \cite[Theorem 9]{Serre}). We include it here for completeness:

Start by a vertex $p_0$ which is going to be of `type 0' and add $m_0$ edges adjacent to it. At the end of these edges add vertices of `type 1'. For each one of these type 1 vertices construct $2m_1$ edges adjacent to it (including the one which joins it with $p_0$). Alternatively put vertices of `type 0' and `type 2' at the end of these edges. Successively add edges and vertices depending on the type. Fix a path $p_0, p_1, \dots p_k$ of vertices of type $0,1,\dots k$ respectively.
At the end we will get a tree $X$ such that the action of the generators of $G$ is given by a rotation around $p_i$. The quotient graph is a path which connects  $p_0,p_1,\ldots, p_k$. See the figure \ref{Figure tree} for an example of this construction.

\begin{figure}[h!]
\centering

\begin{tikzpicture}[scale=0.8]

\draw[-] (-2.6-0.6*2,0)--(2.6+0.6*4+0.6*2,0);
\draw[-] (0,2.6+0.6*2)--(0,-2.6-0.6*2);

\draw[-] (2.6,0)--(2.6-0.6*1,0.6*1.01);
\draw[-] (2.6,0)--(2.6+0.6*1,0.6*1.01);
\draw[-] (2.6,0)--(2.6-0.6*1,-0.6*1.01);
\draw[-] (2.6,0)--(2.6+0.6*1,-0.6*1.01);
\draw[-] (-2.6,0)--(-2.6-0.6*1,0.6*1.01);
\draw[-] (-2.6,0)--(-2.6+0.6*1,0.6*1.01);
\draw[-] (-2.6,0)--(-2.6-0.6*1,-0.6*1.01);
\draw[-] (-2.6,0)--(-2.6+0.6*1,-0.6*1.01);

\draw[-] (0,2.6)--( 0.6*1.01 , 2.6+0.6);
\draw[-] (0,2.6)--( -0.6*1.01 , 2.6+0.6);
\draw[-] (0,2.6)--( 0.6*1.01 , 2.6-0.6);
\draw[-] (0,2.6)--( -0.6*1.01 , 2.6-0.6);
\draw[-] (0,-2.6)--( 0.6*1.01 , -2.6+0.6);
\draw[-] (0,-2.6)--( -0.6*1.01 , -2.6+0.6);
\draw[-] (0,-2.6)--( 0.6*1.01 , -2.6-0.6);
\draw[-] (0,-2.6)--( -0.6*1.01 , -2.6-0.6);

\draw[thick, fill=white] (2.6,0) circle [radius=0.1];
\draw[thick, red, fill=red] (0,0) circle [radius=0.1];
\draw[thick, fill=white] (-2.6,0) circle [radius=0.1];
\draw[thick, fill=white] (0,2.6) circle [radius=0.1];
\draw[thick, fill=white] (0,-2.6) circle [radius=0.1];

\draw[thick, fill=black] (2.6-0.6*1,0.6*1.01)  circle [radius=0.1];
\draw[thick, fill=black] (2.6-0.6*1,-0.6*1.01) circle [radius=0.1];
\draw[red, thick, fill=red] (2.6+0.6*1,0.6*1.01)  circle [radius=0.1];
\draw[red, thick, fill=red] (2.6+0.6*1,-0.6*1.01) circle [radius=0.1];
\draw[thick, fill=black] (2.6+0.9*2,0) circle [radius=0.1];

\draw[thick, fill=black] (-2.6+0.6*1,0.6*1.01)  circle [radius=0.1];
\draw[thick, fill=black] (-2.6+0.6*1,-0.6*1.01) circle [radius=0.1];
\draw[red, thick, fill=red] (-2.6-0.6*1,0.6*1.01)  circle [radius=0.1];
\draw[red, thick, fill=red] (-2.6-0.6*1,-0.6*1.01) circle [radius=0.1];
\draw[thick, fill=black] (-2.6-0.6*2,0) circle [radius=0.1];

\draw[thick, fill=black] ( 0.6*1.01 , 2.6-0.6)  circle [radius=0.1];
\draw[thick, fill=black] ( -0.6*1.01 , 2.6-0.6)  circle [radius=0.1];
\draw[red, thick, fill=red] ( 0.6*1.01 , 2.6+0.6)  circle [radius=0.1];
\draw[red, thick, fill=red] ( -0.6*1.01 , 2.6+0.6)  circle [radius=0.1];
\draw[thick, fill=black] (0,2.6+0.6*2) circle [radius=0.1];

\draw[thick, fill=black] ( 0.6*1.01 , -2.6+0.6)  circle [radius=0.1];
\draw[thick, fill=black] ( -0.6*1.01 , -2.6+0.6)  circle [radius=0.1];
\draw[red, thick, fill=red] ( 0.6*1.01 , -2.6-0.6)  circle [radius=0.1];
\draw[red, thick, fill=red] ( -0.6*1.01 , -2.6-0.6)  circle [radius=0.1];
\draw[thick, fill=black] (0,-2.6-0.6*2) circle [radius=0.1];


\draw[-] (2*2.6+0.9*4+0.6*0-2.6-0.6*2,0)--(2*2.6+0.9*4+0.6*0+2.6+0.6*2,0);
\draw[-] (2*2.6+0.9*4+0.6*0,2.6+0.6*2)--(2*2.6+0.9*4+0.6*0,-2.6-0.6*2);

\draw[-] (2*2.6+0.9*4+0.6*0+2.6,0)--(2*2.6+0.9*4+0.6*0+2.6-0.6*1,0.6*1.01);
\draw[-] (2*2.6+0.9*4+0.6*0+2.6,0)--(2*2.6+0.9*4+0.6*0+2.6+0.6*1,0.6*1.01);
\draw[-] (2*2.6+0.9*4+0.6*0+2.6,0)--(2*2.6+0.9*4+0.6*0+2.6-0.6*1,-0.6*1.01);
\draw[-] (2*2.6+0.9*4+0.6*0+2.6,0)--(2*2.6+0.9*4+0.6*0+2.6+0.6*1,-0.6*1.01);
\draw[-] (2*2.6+0.9*4+0.6*0-2.6,0)--(2*2.6+0.9*4+0.6*0-2.6-0.6*1,0.6*1.01);
\draw[-] (2*2.6+0.9*4+0.6*0-2.6,0)--(2*2.6+0.9*4+0.6*0-2.6+0.6*1,0.6*1.01);
\draw[-] (2*2.6+0.9*4+0.6*0-2.6,0)--(2*2.6+0.9*4+0.6*0-2.6-0.6*1,-0.6*1.01);
\draw[-] (2*2.6+0.9*4+0.6*0-2.6,0)--(2*2.6+0.9*4+0.6*0-2.6+0.6*1,-0.6*1.01);

\draw[-] (2*2.6+0.9*4+0.6*0+0,2.6)--(2*2.6+0.9*4+0.6*0+ 0.6*1.01 , 2.6+0.6);
\draw[-] (2*2.6+0.9*4+0.6*0+0,2.6)--(2*2.6+0.9*4+0.6*0 -0.6*1.01 , 2.6+0.6);
\draw[-] (2*2.6+0.9*4+0.6*0+0,2.6)--(2*2.6+0.9*4+0.6*0+ 0.6*1.01 , 2.6-0.6);
\draw[-] (2*2.6+0.9*4+0.6*0,2.6)--(2*2.6+0.9*4+0.6*0 -0.6*1.01 , 2.6-0.6);
\draw[-] (2*2.6+0.9*4+0.6*0,-2.6)--(2*2.6+0.9*4+0.6*0+ 0.6*1.01 , -2.6+0.6);
\draw[-] (2*2.6+0.9*4+0.6*0,-2.6)--( 2*2.6+0.9*4+0.6*0-0.6*1.01 , -2.6+0.6);
\draw[-] (2*2.6+0.9*4+0.6*0,-2.6)--(2*2.6+0.9*4+0.6*0+ 0.6*1.01 , -2.6-0.6);
\draw[-] (2*2.6+0.9*4+0.6*0,-2.6)--(2*2.6+0.9*4+0.6*0 -0.6*1.01 , -2.6-0.6);

\draw[thick, fill=white] (2*2.6+0.9*4+0.6*0+2.6,0) circle [radius=0.1];
\draw[thick, red, fill=red] (2*2.6+0.9*4+0.6*0,0) circle [radius=0.1];
\draw[thick, fill=white] (2*2.6+0.9*4+0.6*0-2.6,0) circle [radius=0.1];
\draw[thick, fill=white] (2*2.6+0.9*4+0.6*0+0,2.6) circle [radius=0.1];
\draw[thick, fill=white] (2*2.6+0.9*4+0.6*0+0,-2.6) circle [radius=0.1];

\draw[thick, fill=black] (2*2.6+0.9*4+0.6*0+2.6-0.6*1,0.6*1.01)  circle [radius=0.1];
\draw[thick, fill=black] (2*2.6+0.9*4+0.6*0+2.6-0.6*1,-0.6*1.01) circle [radius=0.1];
\draw[red, thick, fill=red] (2*2.6+0.9*4+0.6*0+2.6+0.6*1,0.6*1.01)  circle [radius=0.1];
\draw[red, thick, fill=red] (2*2.6+0.9*4+0.6*0+2.6+0.6*1,-0.6*1.01) circle [radius=0.1];
\draw[thick, fill=black] (2*2.6+0.9*4+0.6*0+2.6+0.6*2,0) circle [radius=0.1];

\draw[thick, fill=black] (2*2.6+0.9*4+0.6*0-2.6+0.6*1,0.6*1.01)  circle [radius=0.1];
\draw[thick, fill=black] (2*2.6+0.9*4+0.6*0-2.6+0.6*1,-0.6*1.01) circle [radius=0.1];
\draw[red, thick, fill=red] (2*2.6+0.9*4+0.6*0-2.6-0.6*1,0.6*1.01)  circle [radius=0.1];
\draw[red, thick, fill=red] (2*2.6+0.9*4+0.6*0-2.6-0.6*1,-0.6*1.01) circle [radius=0.1];

\draw[thick, fill=black] (2*2.6+0.9*4+0.6*0+ 0.6*1.01 , 2.6-0.6)  circle [radius=0.1];
\draw[thick, fill=black] (2*2.6+0.9*4+0.6*0 -0.6*1.01 , 2.6-0.6)  circle [radius=0.1];
\draw[red, thick, fill=red] (2*2.6+0.9*4+0.6*0+ 0.6*1.01 , 2.6+0.6)  circle [radius=0.1];
\draw[red, thick, fill=red] (2*2.6+0.9*4+0.6*0 -0.6*1.01 , 2.6+0.6)  circle [radius=0.1];
\draw[thick, fill=black] (2*2.6+0.9*4+0.6*0+0,2.6+0.6*2) circle [radius=0.1];

\draw[thick, fill=black] (2*2.6+0.9*4+0.6*0+ 0.6*1.01 , -2.6+0.6)  circle [radius=0.1];
\draw[thick, fill=black] (2*2.6+0.9*4+0.6*0 -0.6*1.01 , -2.6+0.6)  circle [radius=0.1];
\draw[red, thick, fill=red] (2*2.6+0.9*4+0.6*0+ 0.6*1.01 , -2.6-0.6)  circle [radius=0.1];
\draw[red, thick, fill=red] (2*2.6+0.9*4+0.6*0 -0.6*1.01 , -2.6-0.6)  circle [radius=0.1];
\draw[thick, fill=black] (2*2.6+0.9*4+0.6*0,-2.6-0.6*2) circle [radius=0.1];

\draw [->, very thick,blue] (1,0) arc [radius=1, start angle=0, end angle= 90];
\draw [->, very thick,blue] (2.6+0.5,0) arc [radius=0.5, start angle=0, end angle=125];
\draw [->, very thick,blue] (2.6-0.5,0) arc [radius=0.5, start angle=180, end angle=180+125];
\draw [->, very thick,blue] (2.6+0.9*2+0.5,0) arc [radius=0.5, start angle=0, end angle=180];

\node at (1,1) {$\alpha$};
\node at (2.6,1) {$\beta$};
\node at (2.6+2*0.9,1) {$\gamma$};

\end{tikzpicture}
\caption{Example of a tree where $G=\Z_{4}\ast\Z_3\ast\Z_2=\langle \alpha,\beta, \gamma\mid \alpha^4=\beta^3=\gamma^2=e\rangle$ acts without inversions. The vertices of type 0, 1 or 2 are represented by the colors red, white and black respectively.}\label{Figure tree}
\end{figure}
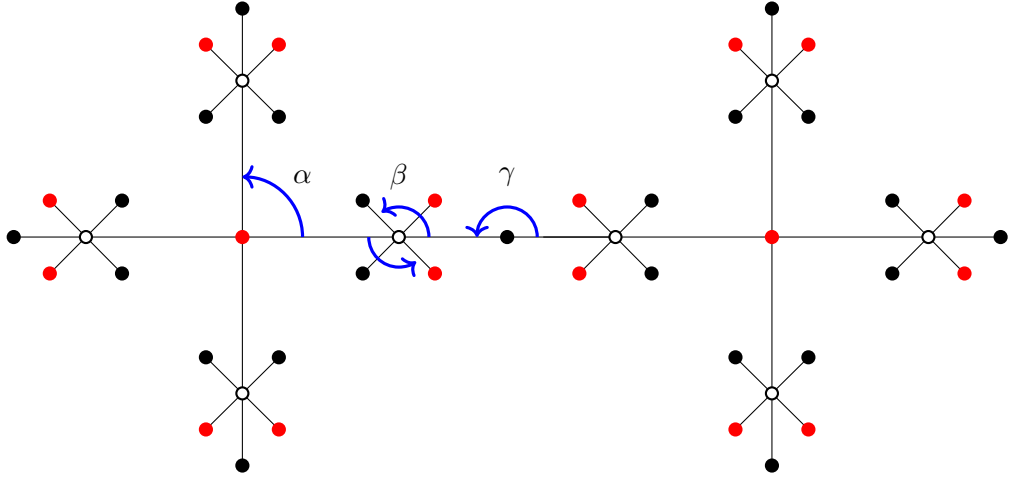

 By Proposition \ref{Theorem Tree Model}, such tree $X$ is precisely  a model for $\underline EG$. Observe that the isotropy group of any vertex is isomorphic to one of the cyclic groups $\Z_{m_i}$, depending on the vertex type.

On the one hand,  since $K^1_G$ is zero in the orbits, $H^*_G(X;K^1_G)=0$. On the other hand, $K^0_G(G/H)\cong R(H)$ with $R(H)$ the representation ring  of the finite subgroup $H$. The multiplicative structure is not being taken into account, so consider $R(H)$ as a free abelian group. In the case of cyclic groups (see \cite[section 5.1]{LinearRepresentation}),
$$R(\Z_m)=\Z^{m}$$
and, if $i\colon \Z_r\hookrightarrow \Z_{mr}$ is the inclusion,
$$R(i)=\varphi_{m,r}\colon\Z^{mr}\to \Z^r,\;\; (a_1,\ldots, a_{mr})\mapsto
\qty( \sum_{i=0}^{m-1} a_{ir+1}, \sum_{i=0}^{m-1} a_{ir+2},\ldots , \sum_{i=0}^{m-1} a_{ir+r}).$$
Then, the Bredon cohomology  is given by the following cochain complex,
$$0\to\Z^{m_0r_1}\oplus\Z^{m_1r_0r_1}\dots \oplus\Z^{m_kr_k}\xrightarrow{\delta} \Z^{r_1}\oplus\dots \oplus \Z^{r_k}\to 0$$
where $(\bar{a}_1,\dots, \bar{a}_k)\in \Z^{m_0r_1}\oplus\dots \oplus\Z^{m_kr_k}$ is mapped by $\delta$ to
$$\textstyle \qty(\varphi_ {m_0,r_1}(\bar{a}_1)-\varphi_{m_1r_2,r_1}(\bar{a}_2), \varphi_{m_1r_1,r_2}(\bar{a}_2)-\varphi_{m_2r_3,r_2}(\bar{a}_3),..., \varphi_{m_{k-1}r_{k-1},r_k}(\bar{a}_{k-1})-\varphi_{m_k,r_k}(\bar{a}_k))$$
in $\Z^{r_1}\oplus\dots\oplus\Z^{r_k}$.

Note that all maps $\varphi_{\bullet,\bullet}$ are surjective. Therefore, $\delta$ is surjective onto the first term of the image, $\Z^{r_1}$. Successively, any element of any term can be proved to be targeted by $\delta$, which implies that $\delta$ is itself surjective. Therefore,  $H^0_G(X;K^0_G)=\Z^\sigma$  where
$\sigma=\sum_{i=0}^k m_ir_{i-1}r_i - \sum_{i=1}^k r_i$
with $r_0=1=r_{k+1}$.

Hence, the Atiyah-Hirzebruch spectral sequence of Theorem \ref{Spectral Sequence Theorem} trivially collapses at the second page $E_2^{p,q}=H^p_G(X;K^q_G)$  and the first statement is proved.

In a similar way, we compute the equivariant $KO$-theory. Recall from \cite[Remark in p. 133]{Segal} that for a trivial   $G$-space $X$,
\begin{equation}\label{ecu2}
KO^0_G(X)\cong (KO^0(X)\otimes R(G;\R))\oplus (K^0(X)\otimes R(G;\C))\oplus (K\HH^0 (X)\otimes R(G;\HH))
\end{equation}
 where $K\HH$ is the quaternionic $K$-theory (see \cite[Sec. 9]{qqq}). Note that the groups $R(G;\K)$ can be obtained as follows:  $RO(G)$ is the free abelian group generated by the simple real $G$-modules. Given such a simple representation $M$, compute its endomorphism field, $D_M=\text{End}(M)$, then we have $D_M=\R,\C$ or $\HH$.
 Finally, $R(G;\K)$ is the free group generated by simple real $G$-modules with endomorphism field $D_M=\K$.

By definition, $KO_{\Z_s}^{-n}(\pt)=\ker(KO^0_{\Z_s}(\S^n)\to KO^0_{\Z_s}(\pt))$. The $KO$-theory of an sphere is well-known. On the other hand, $\Z_s$ has $s$ irreducible real representations. The trivial representation has $\R$ as endomorphism field; also the sign representation does if $s$ is even. Any other representation has $\C$ as endomorphism field.
These facts about the real representation theory of $\Z_s$ can be deduced from the classification given in \cite[p. 108]{LinearRepresentation} and the table of characters of the irreducible representations \cite[section 5.1]{LinearRepresentation}.

Using all these facts, the computation of $KO_{\Z_s}^{-n}(\pt)$ is immediate and it is expressed in the following table:

\begin{table}[ht]
\centering
\begin{tabular}{c |  c  c  c  c  c  c  c  c}
$n$ & 0 &1 &2&$\;3\;$&$\;4\;$&$\;5\;$&$\;6\;$&$\;7\;$\\
[1ex]
\hline
\\
$KO^{-n}_{\Z_s}(\pt)$  & $\Z^{\floor{\frac{s}{2}}+1}$ & $(\Z_2)^{\frac{3+(-1)^s}{2}}$ &  $(\Z_2)^{\frac{3+(-1)^s}{2}}\oplus \Z^{\ceil{\frac{s}{2}}-1}$ & $0$ & $\Z^{\floor{\frac{s}{2}}+1}$& $0$ & $\Z^{\ceil{\frac{s}{2}}-1}$ & $0$ \\

\end{tabular}
\end{table}

Furthermore, it can be checked that all representations in the image of $i^*$,  induced by the inclusions $i\colon \Z_r\hookrightarrow \Z_{mr}$,  are targeted, with the possible exception of the sign representation. If we impose $r$ to be odd, there is no such sign representation in $KO^{-n}_{\Z_s}(\pt)$, and the induced map $i^*$ is always surjective. This implies that, when the Bredon cohomology is computed, the degree 1 cohomology is annihilated, and the degree 0 cohomology is just the the kernel of the boundary map $\delta$.

In this situation, the $E_2$-page of the spectral sequence has only one non-trivial column, so it collapses giving the stated result.
\end{proof}

\subsection{Right-angled Coxeter Groups}

A group $W$ is called a $\textit{Coxeter group}$ \cite{Coxetergroups} if it admits a presentation of the form,
$$W=\langle S\mid (s_is_j)^{m_{ij}} \text{ for all } s_i, s_j\in S\rangle$$
for some finite set $S$ and where $M=(m_{ij})_{ij}$ is a symmetric matrix with $m_{ij}\in\{2,3,\dots \infty\}$ and with 1 in the diagonal entries. If the non-diagonal entries of $M$ are 2 or $\infty$, the group is called a \textit{right-angled Coxeter group}. A subset $J$ of $S$ is called $\textit{spherical}$ if the subgroup $W_J=\langle J\rangle \leq W$ is finite. In this case, the subgroup $W_J$ is called spherical. 

In  \cite[Lemma 5.4 and Theorem 5.5]{DegrijseLeary} is proved that, for all right-angled Coxeter group $W$, $H^n_W(X;K^0_W)=0$ for all $n>0$ and $H^0_W(X;K^0_W)=\Z^d$, where $X$ is a model for $\E W$ and $d$ is the number of spherical subgroups of $W$. Moreover, $K^1_W(\E W)=0$ and $K^0_W(\E W)\cong \Z^d$. We first extend this computation to equivariant $KO$-theory:

\begin{theorem}\label{cox1} Let $W$ be a right-angled Coxeter group with $d$ spherical subgroups. Then:
$$KO^{-n}_W(\E W)=\left\{\begin{array}{ll} \Z^{d}, & \text{ for } n\equiv 0,4 \text{ (mod 8),}\\
 (\Z_2)^{d}, & \text{ for } n\equiv 1,2 \text{ (mod 8),}\\
 0, & \text{ for } n\equiv 3,5,6,7 \text{ (mod 8).}
 \end{array}\right.$$
 \end{theorem}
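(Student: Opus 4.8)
The plan is to feed the real coefficient system into the equivariant Atiyah--Hirzebruch spectral sequence of Theorem \ref{Spectral Sequence Theorem},
$$E_2^{p,q}=H^p_W(X;KO^q_W)\implies KO^{p+q}_W(X),$$
for a model $X$ of $\E W$, and to transport the complex computation of \cite{DegrijseLeary} to the real setting one coefficient system at a time. The goal is to prove that for every fixed $q$ the Bredon cohomology $H^p_W(X;KO^q_W)$ is concentrated in the single column $p=0$. Once this is established the spectral sequence collapses for dimensional reasons, all extension problems are trivial, and $KO^{-n}_W(\E W)\cong H^0_W(X;KO^{-n}_W)$.

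First I would pin down the isotropy groups. The finite (spherical) subgroups $W_J\leq W$ of a right-angled Coxeter group are generated by sets $J$ of pairwise commuting involutions, so each is elementary abelian, $W_J\cong(\Z_2)^{|J|}$. Every irreducible real representation of such a group is one-dimensional with endomorphism field $\R$, so $R(W_J;\C)=R(W_J;\HH)=0$ and Segal's decomposition (\ref{ecu2}), applied to spheres with trivial action exactly as in the proof of Theorem \ref{amalgamado}, collapses to $KO^{-n}_{W_J}(\pt)\cong KO^{-n}(\pt)\otimes R(W_J;\R)$. Furthermore complexification identifies $R(W_J;\R)\cong R(W_J)$ compatibly with the restriction maps, since restricting a real-type representation along an inclusion of elementary abelian $2$-groups again produces real-type representations.

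Consequently, as contravariant functors on the orbit category the coefficient system $KO^{-n}_W$ is isomorphic to $KO^{-n}(\pt)\otimes K^0_W$: the value at each orbit is the corresponding value of $K^0_W$ tensored with the fixed group $KO^{-n}(\pt)$, with the same restriction maps. Hence, if $C^\bullet$ denotes the Bredon cochain complex of free abelian groups computing $H^*_W(X;K^0_W)$, then $KO^{-n}(\pt)\otimes C^\bullet$ computes $H^*_W(X;KO^{-n}_W)$. By Lemma 5.4 and Theorem 5.5 of \cite{DegrijseLeary}, $C^\bullet$ consists of free abelian groups with cohomology $\Z^d$ in degree $0$ and $0$ above it.

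It then remains to read off the four residue classes of $n$ modulo $8$ from the values of $KO^{-n}(\pt)$. For $n\equiv0,4$ one has $KO^{-n}(\pt)=\Z$ and recovers $\Z^d$; for $n\equiv3,5,6,7$ one has $KO^{-n}(\pt)=0$ and everything vanishes. The delicate case, and what I expect to be the main obstacle, is $n\equiv1,2$, where $KO^{-n}(\pt)=\Z_2$ and I must compute the cohomology of $C^\bullet\otimes\Z_2$. The point is that since $H^{>0}(C^\bullet)=0$ while $H^0(C^\bullet)=\Z^d$ is free, the augmented sequence $0\to\Z^d\to C^0\to C^1\to\cdots$ is an exact sequence of free abelian groups resolving a projective module, hence splits degreewise; tensoring with $\Z_2$ preserves exactness and yields $H^0(C^\bullet\otimes\Z_2)=(\Z_2)^d$ with no higher cohomology. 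In all four cases the Bredon cohomology is concentrated in $p=0$, the spectral sequence collapses at $E_2$, and assembling the cases gives the stated value of $KO^{-n}_W(\E W)$.
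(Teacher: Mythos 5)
Your proposal is correct and follows essentially the same route as the paper: identify the spherical subgroups as elementary abelian $2$-groups, use Segal's decomposition to reduce the coefficient system $KO^{-n}_W$ to $KO^{-n}(\pt)\otimes K^0_W$, and invoke the Degrijse--Leary computation to concentrate everything in the column $p=0$ so the spectral sequence collapses. The only cosmetic difference is in the case $n\equiv 1,2$ (mod $8$): you split the augmented exact sequence of free abelian groups degreewise before tensoring with $\Z_2$, whereas the paper applies the Universal Coefficient Theorem and kills the Tor term using torsion-freeness of the cohomology --- two phrasings of the same observation.
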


\begin{proof}
Recall that any finite subgroup of an right-angled Coxeter group is isomorphic to $(\Z_2)^k$ for some $k$. Using the formula $R(G_1\times G_2)=R(G_1)\otimes R(G_2)$ (see section 3.2 of \cite{LinearRepresentation}), we get $R((\Z_2)^k)\cong \Z^{2^k}\cong RO((\Z_2)^k)$. The crucial fact that simplifies the computation is that the endomorphism field of any irreducible real representation of $(\Z_2)^k$ is $\R$, so using  formula (\ref{ecu2}) for $G=(\Z_2)^k$ we obtain that
$$KO^{-n}_G(\pt)\cong \Z^{2^k}$$
for $n\equiv 0,4$ (mod 8) and
$$KO^{-n}_G(\pt)\cong (\Z_2)^{2^k}$$
for $n\equiv 1,2$ (mod 8) and
$$KO^{-n}_G(\pt)=0$$
for $n\equiv 3,5,6,7$ (mod 8).

For $n\equiv 0,4$ (mod 8), the Bredon cochain complex of a  model $X$ for $\E W$ with coefficients in $KO^{-n}_W$ is equivalent to the one obtained with coefficients in $K^{0}_W$. From the previously known result recalled above, we get that $H^0_W(X;KO^{-n}_W)\cong\Z^d$ and zero in other degrees.
On the other hand, for $n\equiv 1,2$ (mod 8), if the Bredon cochain complex with coefficient in  $K^{0}_W$ is:
$$0\rightarrow \Z^{m_0}\xrightarrow{\delta_0}\Z^{m_1} \xrightarrow{\delta_1}\Z^{m_2}\rightarrow\cdots$$
then, tensoring by $\Z_2$, we get the Bredon cochain complex with coefficient in $KO^{-n}_W$:
$$0\rightarrow \Z^{m_0}\otimes \Z_2 \xrightarrow{\delta_0\otimes \Z_2}\Z^{m_1}\otimes \Z_2 \xrightarrow{\delta_1\otimes \Z_2}\Z^{m_2}\otimes \Z_2\rightarrow\cdots$$

The computation of the cohomology of this cochain can be easily done using the Universal Coefficient Theorem. 
Consider the chain complex $C_{-n}=\Hom_{\catname{Or}_\F(W)}({C_n}(X); KO^0_W)$ for $n\geq 0$ and $C_n=0$ for $n>0$.
If we tensor by $\Z_2$, the homology of $C_\bullet\otimes \Z_2$ is the same that the Bredon cohomology with coefficients in $KO^{-1}_W$ or $KO^{-2}_W$. The Universal Coefficient Theorem relates these two homology groups:

$$0\to H_n(C_\bullet)\otimes \Z_2\to H_n(C_\bullet\otimes \Z_2)\to \text{Tor}(H_{n-1}(C_\bullet),\Z_2)\to 0$$
is an exact sequence. Since all the Bredon cohomology groups computed above are torsion-free, the last term in the sequence is zero and we deduce that:
$$H_W^{n}(X; KO_W^{-1})\cong H_W^n(X;KO_W^{-2})\cong H_W^n(X;K^0_W)\otimes \Z_2$$

Note that, in any case, there is only one non-trivial column in the $E_2$-page of the Atiyah-Hirzebruch spectral sequence which trivially collapses.

Finally, the case $n\equiv 3,5,6,7$ (mod 8) is trivial, and hence, the proof is complete.
\end{proof}

\subsection{Non-right-angled Coxeter Groups}

Dealing with these groups is considerably more difficult than in the right-angled case. Nevertheless, we compute the $K$- and $KO$-theory of $\underline EW$ for certain non right-angled groups.  For a fixed $n$, let $W$ be the Coxeter group associated to the Coxeter matrix $M$ of size $n+1$  given by:

$$M=\qty(
\begin{array}{ccccccc}  2 & 3 & \infty&  & &&     \\
                                 3 & 2 & 3     &  \infty &  &\infty& \\
                                 \infty & 3& 2& 3 & \infty&  & \\
                                 & \ddots & \ddots &\ddots & \ddots & \ddots \\
                                 & & \infty &3 & 2 & 3 &\infty \\
                                 & \infty& & \infty & 3 & 2 &3\\
                                 & & & & \infty & 3 &2\\
\end{array})$$

Equivalently, this group can be given by its Coxeter-Dynkin diagram, where consecutive nodes are connected by an edge with a label 3 and non-consecutive nodes are connected by an edge with label $\infty$.

\begin{figure}[h!]
\centering
\begin{tikzpicture}
\draw[thick, fill=black] (0,0) circle [radius=0.1];
\draw[thick, fill=black] (2,0) circle [radius=0.1];
\draw[thick, fill=black] (4,0) circle [radius=0.1];

\node  at (5,0) {$\dots$};

\draw[thick, fill=black] (6,0) circle [radius=0.1];
\draw (0,0)--(4,0);

\draw[dashed] (0,0) to [out=80,in=100] (4,0);

\node at (2.1,1.4) {$\infty$};
\node at (2.5,2) {$\infty$};
\node at (4,-1.4) {$\infty$};

\node[below] at (0,-0.2) {$s_0$};
\node[below] at (1.8,-0.2) {$s_1$};
\node[below] at (4.2,-0.2) {$s_2$};
\node[below] at (6.2,-0.1) {$s_n$};

\node[above] at (1,0) {$3$};
\node[above] at (3,0) {$3$};

\draw[dashed] (0,0) to [out=85, in=95] (6,0);
\draw[dashed] (2,0) to [out=-80,in=-100] (6,0);

\end{tikzpicture}
\end{figure}

\begin{theorem}\label{non1}
$$\textstyle K^n_W(\E W)=\left\{\begin{array}{cl} \Z^{n+2}, & \text{$n$ even,}\\ 0, & \text{$n$ odd,} \end{array}\right.\qquad KO^{-n}_W(\E W)=\left\{\begin{array}{cl} \Z^{n+2}, & \text{$n \equiv 0,4$(mod $8$),}\\ (\Z_2)^{n+2}, &\text{$n \equiv 1,2$(mod $8$),}\\
		0, & \text{$n \equiv 3,5,6,7$(mod $8$).}
		 \end{array}\right.$$
\end{theorem}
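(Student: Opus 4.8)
The plan is to reduce this to the amalgamated-product situation already treated in Theorem~\ref{amalgamado}. The first step is to recognize the structural content of the Coxeter matrix: the generators $s_0,\dots,s_n$ are involutions, consecutive ones satisfy the braid relation $(s_is_{i+1})^3=1$, and non-consecutive ones satisfy $m_{ij}=\infty$, i.e.\ no relation at all. Hence I would identify
$$W\cong \langle s_0,s_1\rangle *_{\langle s_1\rangle}\langle s_1,s_2\rangle *_{\langle s_2\rangle}\cdots *_{\langle s_{n-1}\rangle}\langle s_{n-1},s_n\rangle,$$
an amalgam of $n$ copies of $\langle s_i,s_{i+1}\rangle\cong S_3$ (the dihedral group of order $6$) along the cyclic subgroups $\langle s_i\rangle\cong\Z_2$. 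The point is that in such an amalgam the only interaction between $s_{i-1}$ and $s_{i+1}$ is forced through $s_i$, which reproduces exactly $m_{i-1,i+1}=\infty$. By Bass--Serre theory $W$ then acts without inversions on its Bass--Serre tree $X$, with vertex stabilizers conjugate to $S_3$ and edge stabilizers conjugate to $\Z_2$; by Proposition~\ref{Theorem Tree Model} this $X$ is a model for $\E W$. In particular $X$ is one-dimensional and the quotient $X/W$ is a path with $n$ vertex-orbits and $n-1$ edge-orbits.

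With the model in hand, the $K$-theory computation is a rerun of Theorem~\ref{amalgamado}. I would record $R(S_3)\cong\Z^3$ and $R(\Z_2)\cong\Z^2$, and the restriction $R(S_3)\to R(\Z_2)$ along $\Z_2\hookrightarrow S_3$, which sends the trivial, sign, and $2$-dimensional irreducibles to $1$, $\varepsilon$, and $1+\varepsilon$ respectively and is therefore \emph{surjective}. The Bredon cochain complex with coefficients in $K^0_W$ is thus $0\to(\Z^3)^{n}\xrightarrow{\delta}(\Z^2)^{\,n-1}\to 0$, with $\delta$ the alternating difference of these restrictions along the path. Exactly as in Theorem~\ref{amalgamado}, surjectivity of every restriction together with the linear shape of $X/W$ lets one solve term by term, so $\delta$ is surjective; hence $H^1_W(X;K^0_W)=0$ and $H^0_W(X;K^0_W)=\ker\delta\cong\Z^{3n-2(n-1)}=\Z^{n+2}$. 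Since $K^1_W$ vanishes on orbits and $X$ is one-dimensional, the Atiyah--Hirzebruch spectral sequence of Theorem~\ref{Spectral Sequence Theorem} has a single nonzero column and collapses, giving the stated $K$-theory.

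For the real case the decisive observation is representation-theoretic: every irreducible real representation of $S_3$ and of $\Z_2$ has endomorphism field $\R$ (each complex irreducible is of real type, Frobenius--Schur indicator $+1$), so $RO(G)=R(G)$ and formula~(\ref{ecu2}) collapses to $KO^{-m}_G(\pt)\cong KO^{-m}(\pt)\otimes R(G)$ for $G\in\{S_3,\Z_2\}$. This is precisely the feature that made Theorem~\ref{cox1} work, so I would follow that proof verbatim. For $m\equiv 0,4$ the coefficient system and its restrictions coincide with the $K^0$ case, yielding $H^0=\Z^{n+2}$ and $H^1=0$. For $m\equiv 1,2$ the Bredon complex is the integral $K^0$-complex tensored with $\Z_2$; since the integral Bredon cohomology just computed is torsion-free, the Universal Coefficient Theorem has vanishing $\mathrm{Tor}$ term and gives $H^0\cong(\Z_2)^{n+2}$, $H^1=0$. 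For $m\equiv 3,5,6,7$ the coefficients vanish outright. In every residue $H^1_W(X;KO^q_W)=0$, so the spectral sequence again collapses onto the $p=0$ column and $KO^{-m}_W(\E W)\cong H^0_W(X;KO^{-m}_W)$, which is exactly the asserted pattern.

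The genuine content sits entirely in the first paragraph together with the endomorphism-field computation: once $W$ is identified as this specific amalgam and the tree is certified as a model via Proposition~\ref{Theorem Tree Model}, and once one checks that all irreducibles of $S_3$ and $\Z_2$ are of real type, the remaining surjectivity and collapse arguments are identical to those in Theorems~\ref{amalgamado} and~\ref{cox1}. I expect the only place requiring care is verifying the Frobenius--Schur indicator $+1$ for the $2$-dimensional representation of $S_3$ (so that $KO$ genuinely reduces to $K$ with $\R$-coefficients), and keeping the two roles of the symbol $n$ — the size parameter of $W$, which fixes the rank $n+2$, versus the cohomological degree, which fixes the parity or residue mod $8$ — carefully distinguished.
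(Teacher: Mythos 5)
Your proof is correct, but it reaches the model for $\E W$ by a genuinely different route than the paper. You observe that, because every non-consecutive pair of generators has $m_{ij}=\infty$ (i.e.\ no relation), the Coxeter presentation of $W$ coincides with the presentation of the iterated amalgam $\langle s_0,s_1\rangle *_{\langle s_1\rangle}\cdots *_{\langle s_{n-1}\rangle}\langle s_{n-1},s_n\rangle$ of $n$ copies of $\mathcal{S}_3$ over $\Z_2$'s, and you then take the Bass--Serre tree and certify it as a model for $\E W$ via Proposition~\ref{Theorem Tree Model} --- exactly the toolkit of Section 2.1. The paper instead stays inside the Coxeter framework: it lists the spherical subsets, builds the Bestvina complex (a path of $n$ vertices), and invokes Theorem~\ref{Theorem Coxeter} to get the model as the basic construction $D(B,G(\QQ))$. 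The two models are essentially the same one-dimensional complex with quotient a path of $n$ vertex-orbits (stabilizer $\mathcal{S}_3$) and $n-1$ edge-orbits (stabilizer $\Z_2$), so from that point on the Bredon complex $(\Z^3)^n\to(\Z^2)^{n-1}$, the surjectivity of $f\colon(a,b,c)\mapsto(a+c,b+c)$, the rank count $3n-2(n-1)=n+2$, the real-type observation for all irreducibles of $\mathcal{S}_3$ and $\Z_2$, and the collapse of the spectral sequence are identical in both arguments. What your route buys is a more elementary, self-contained argument that needs none of the Bestvina/panel-complex machinery; what the paper's route buys is uniformity --- the Bestvina complex method still applies when the Coxeter diagram is perturbed into a cycle (Theorem~\ref{non2}), where the group is no longer an amalgam of this linear form and your tree argument breaks down. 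Your two flagged points of care (the Frobenius--Schur indicator of the $2$-dimensional representation of $\mathcal{S}_3$, and the clash between the two uses of the symbol $n$) are both real and both resolved as you expect.
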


For the proof we use  the {\em Bestvina complex} in \cite{Bestvina} which provides a particular model for $\underline EW$ for any general Coxeter group.

\begin{definition} A \textit{simple complex of finite groups} $G(\mathcal{Q})$ over a finite poset $\mathcal{Q}$ is a functor $P$, from the category associated to the poset $\mathcal{Q}$ to the category of finite groups, such that the non-identity morphisms are sent to injective non-surjective homomorphism.

For such simple complex of groups, the \textit{fundamental group} is the direct limit of this functor:

$$\widehat{G(\mathcal{Q})}
=\lim_{\substack{\longrightarrow\\ J\in\mathcal{Q}}} P(J)$$

\end{definition}

In the specific case of a Coxeter system $(W,S)$, we can associate the following simple complex of finite groups over the poset $\QQ$ of spherical subsets: for $J\subset S$, take $P(J)=\langle J\rangle\subset W$ and the inclusions of subsets $I\subset J\subset S$ induce inclusions of subgroups $P(I)=\langle I\rangle \hookrightarrow \langle J \rangle=P(J)$. It is clear that the fundamental group is equal to $W$.

\begin{definition}
A \textit{panel complex} over a finite poset $\QQ$ is a compact polyhedron $X$ with a family of subpolyhedrons $\{X_J\}_{J\in \QQ}$ called \textit{panels} such that $X$ is the union of the panels, $X_I\subset X_J$ if $J\leq I\in \QQ$ and the intersection of panels is empty or a union of panels.
\end{definition}

In \cite{Bestvina} two different kind of panel complexes are presented. Both are valid for the construction of a model for $\E W$, but the Bestvina complex gives a simpler model, in a geometric sense.

\begin{definition}
Given a finite poset $\QQ$, the \textit{Bestvina complex} $B$ is constructed by defining, for each maximal element $J$ of $\QQ$, $B_J$ to be a point. The following $B_J$ are constructed successively by defining $B_J$ as a compact contractible polyhedron containing $\bigcup_{I>J}B_I$ of the smallest possible dimension.
\end{definition}

\begin{definition}
Given a finite poset $\QQ$, a panel complex $(X,\{X_J\}_{J\in \QQ})$ over $\QQ$ and a simple complex of finite of groups $G(\QQ)$ over $\QQ$ with fundamental group $G$, the \textit{basic construction} is the space

$$D(X,G(\QQ))=(G\times X)/\sim$$

with the equivalence relation $(g_1,x)\sim(g_2,x)$ for all $x\in X$ and for $g_1,g_2\in G$ such that $g_{1}^{-1}g_2$ belongs to $P(J(x))$, where $X_{J(x)}$ is the intersection of all the panels containing $x$ (which is again a panel by the definition of panel complex).
\end{definition}

The following theorem establishes that this basic construction is a model for $\E W$. It can be obtained as a special case of the Theorem 5.2 of \cite{Bestvina}.

\begin{theorem}\label{Theorem Coxeter}
Let $(W,S)$ be a Coxeter system, with $\QQ$ the poset of spherical groups, $G(\QQ)$ the simple complex of finite groups defined above for such poset and $B$ the Bestvina complex over $\QQ$. Then $D(B,G(\QQ))$ is a model for $\E W$.\hfill$\square$
\end{theorem}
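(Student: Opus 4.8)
The plan is to verify directly that $D(B,G(\QQ))$ satisfies the two defining properties of a model for $\E W$ recorded in Definition \ref{def1}: every isotropy subgroup is finite, and the $H$-fixed point set is weakly contractible for each finite $H\le W$. The $W$-CW structure is automatic, coming from the polyhedral structure of the compact complex $B$ together with the free permutation action of $W=\widehat{G(\QQ)}$ on the first factor of $(W\times B)/\sim$, so the substance of the argument is the isotropy and fixed-point analysis, which I would reduce to \cite[Thm.\ 5.2]{Bestvina}.

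First I would compute stabilizers. The action is $h\cdot[g,x]=[hg,x]$, and from the defining equivalence relation one reads off that $[hg,x]=[g,x]$ exactly when $(hg)^{-1}g=g^{-1}h^{-1}g\in P(J(x))=W_{J(x)}$; since $P(J(x))$ is a subgroup this says $g^{-1}hg\in W_{J(x)}$. Hence the stabilizer of $[g,x]$ is the conjugate $gW_{J(x)}g^{-1}$. Because $X_{J(x)}$ is a panel, $J(x)$ lies in the poset $\QQ$ of spherical subsets, so $W_{J(x)}$ is finite; therefore every isotropy group is a finite parabolic and lies in $\F$. The same computation shows, for a fixed finite $H\le W$, that $[g,x]\in D(B,G(\QQ))^H$ if and only if $g^{-1}Hg\subseteq W_{J(x)}$, equivalently $H\subseteq gW_{J(x)}g^{-1}$. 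Non-emptiness of this locus is guaranteed by the standard fact that any finite subgroup of a Coxeter group lies in a conjugate of a spherical parabolic subgroup.

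It then remains to prove that the subspace cut out by the condition $H\subseteq gW_{J(x)}g^{-1}$ is weakly contractible, and this is the main obstacle. The geometric content is that the Bestvina pieces $B_J$ were chosen to be compact and contractible, so the fixed locus is assembled as an increasing union of contractible panels glued along intersections that are again (unions of) panels; this is precisely the inductive contractibility built into the definition of $B$. Rather than reproving it, I would invoke Theorem 5.2 of \cite{Bestvina}, which treats the basic construction $D(X,G(\QQ))$ over an arbitrary panel complex and identifies its fixed-point behaviour. Specializing to $X=B$ and to the simple complex of finite groups of $(W,S)$ — whose fundamental group is $W$ by construction — yields exactly the weak contractibility of each $H$-fixed point set. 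Combining this with the finiteness of isotropy established above shows that $D(B,G(\QQ))$ is a $W$-CW-complex with finite isotropy and weakly contractible fixed points, i.e.\ a model for $\E W$, so the theorem follows as a corollary of \cite[Thm.\ 5.2]{Bestvina}.
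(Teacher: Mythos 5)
Your proposal is correct and follows essentially the same route as the paper, which states the theorem without proof as a direct corollary of \cite[Thm.\ 5.2]{Bestvina}; your explicit computation of the stabilizer $gW_{J(x)}g^{-1}$ and the reduction of weak contractibility of fixed points to that cited theorem simply fills in the details the paper leaves implicit.
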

\begin{proof}[Proof of Theorem \ref{non1}]

Since in a Coxeter group the formula $\text{ord}(s_is_j)=m_{ij}$ holds
\cite[Section 1,  Chapter IV]{Bourbaki}, the only spherical subsets in $S$ are $\{s_i\}$ for $i=0,\ldots, n$  and $\{s_i,s_{i+1}\}$ for $i=0,\dots, n-1$. Hence, the spherical subgroups are $\langle \emptyset\rangle=\{ e \}$, $\langle s_i\rangle =\Z_2$ and $\langle s_i,s_{i+1}\rangle= A_2\cong \mathcal{S}_3$.
Thus, the Bestvina complex $B$ is a path of $n$ vertices:

 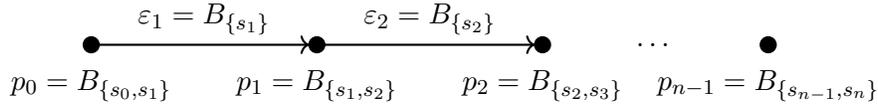
\begin{figure}[h!]
\centering
\begin{tikzpicture}

\draw[thick, fill=black] (0,0) circle [radius=0.1];
\draw[thick, fill=black] (3,0) circle [radius=0.1];
\draw[thick, fill=black] (6,0) circle [radius=0.1];

\node  at (7.5,0) {$\dots$};

\draw[thick, fill=black] (9,0) circle [radius=0.1];

\draw[ thick,->] (0,0)--(2.9,0);
\draw[ thick,->] (3,0)--(5.9,0);

\node[below] at (0,-0.2) {$p_0=B_{\{ s_0,s_1\} } $};
\node[below] at (3,-0.2) {$p_1=B_{\{ s_1,s_2\} } $};
\node[below] at (6,-0.2) {$p_2=B_{\{ s_2,s_3\} } $};
\node[below] at (9,-0.2) {$p_{n-1}=B_{\{ s_{n-1},s_n\} } $};

\node[above] at (1.5,0) {$\varepsilon_1=B_{\{s_1\}}$};
\node[above] at (4.5,0)    {$ \varepsilon_2=B_{\{s_2\}}$};

\end{tikzpicture}
\caption{Bestvina complex associated to the group $W$.}
\end{figure}
By Theorem \ref{Theorem Coxeter} the model $D(B,G(\QQ))=X$  for $\E W$ and its Bredon cohomology (with coefficient system $K^0_W$) is given by the following cochain complex:
$$0\rightarrow\bigoplus_{i=0}^{n-1} R(\mathcal{S}_3) \xrightarrow{\delta} \bigoplus_{i=1}^{n-1} R(\Z_2)\rightarrow 0$$
Using \cite{LinearRepresentation} as reference, it can be checked that $R(\mathcal{S}_3)=\Z^3$ and the inclusion  $\Z_2\cong\langle s_i\rangle \to \langle s_1,s_2\rangle\cong \mathcal{S}_3$, for $i=1,2$ is sent to $
f:\Z^3\cong R(\mathcal{S}_3)\to  \Z^2\cong R(\Z_2), (a,b,c)\mapsto (a+c,b+c)$. Then, 
$$\delta: (\Z^3)^{n}\to (\Z^2)^{n-1},\;\;\; (x_1,\ldots, x_n)\mapsto (f(x_2)-f(x_1), f(x_3)-f(x_2),\dots, f(x_n)-f(x_{n-1}))$$

Since the map $f$ is surjective, inductively, the map $\delta$ can be proved to be surjective, which implies that the only non-zero Bredon cohomology group is $H^0_W(\E W; K^0_W)=\Z^{n+2}$ which proves the first statement.

For the equivariant $KO$-theory observe that, since the endormorphism field of all the real irreducible representation of $\mathcal{S}_3$ is $\R$, the situation is analogous to the case of right-angled Coxeter groups: for $i=1$ or $i=2$, $H^n_W(\E W; KO^{-i}_W)\cong H^n_W(\E W; K^0_W)\otimes \Z_2$, and for $i=0$ or $i=4$, $H^n_W(\E W; KO^{-i}_W)\cong H^n_W(\E W; K^0_W)$. From this the result follows.
\end{proof}

In our last result we see that, if the Coxeter matrix is slightly perturbed, the $K$- and $KO$-theory drastically change. Consider $W$ the Coxeter group with the same matrix $M$ as before except that $m_{0,n}=3$ instead of $\infty$. Equivalently, in the Coxeter-Dynkin diagram, join the initial and the ending nodes by an edge with label 3. Then,

\begin{theorem}\label{non2}
$$K^n_W(\E W)=\left\{\begin{array}{cl} \Z^{n+3}, & \text{ $n$ even,}\\ \Z, & \text{$n$ odd,} \end{array}\right.\qquad KO^{-n}_W(\E W)=\left\{\begin{array}{cl} \Z_2\oplus \Z^{n+3}, &  \text{$n \equiv 0$(mod $8$),}\\
		A, &  \text{$n \equiv 1$(mod $8$),}\\ 
		(\Z_2)^{n+3}, & \text{$n \equiv 2$(mod $8$),}\\
		\Z_2, &  \text{$n \equiv 3$(mod $8$),}\\
		 \Z^{n+3}, & \text{$n \equiv 4$(mod $8$),}\\
		 0, &  \text{$n \equiv 5,6$(mod $8$),}\\
		 \Z_2, &  \text{$n \equiv 7$(mod $8$),}\\
		 \end{array}\right.$$
where $A$ is an extension of $\Z_2$ by $(\Z_2)^{n+3}$.
\end{theorem}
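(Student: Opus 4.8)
The plan is to reuse the Bestvina-complex machinery of Theorem \ref{non1}, the only change being that setting $m_{0,n}=3$ adjoins the spherical pair $\{s_0,s_n\}$, so the $3$-labeled diagram becomes a cycle on the $n+1$ nodes rather than a path. I would first record the spherical subsets: the $n+1$ singletons $\{s_i\}$ (each giving $\langle s_i\rangle\cong\Z_2$) and the $n+1$ cyclically adjacent pairs (each giving $\langle s_i,s_{i+1}\rangle\cong\mathcal S_3$, now including $\{s_0,s_n\}$); for $n\ge 3$ no triple is spherical, since any three nodes of the cycle contain a non-adjacent pair carrying the label $\infty$. Hence the nonempty panels of the Bestvina complex assemble into a circle, and because $B=B_\emptyset$ must be contractible, the smallest-dimensional choice fills this circle with a single $2$-cell, so $B\simeq D^2$. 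By Theorem \ref{Theorem Coxeter}, $X=D(B,G(\QQ))$ is then a $2$-dimensional model for $\E W$ with $n+1$ orbits of vertices (isotropy $\mathcal S_3$), $n+1$ orbits of edges (isotropy $\Z_2$), and one free orbit of $2$-cells (trivial isotropy). This extra top cell is precisely what makes the present case harder than Theorem \ref{non1}.

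For complex $K$-theory I would write down the Bredon cochain complex
\[
0\to (\Z^3)^{n+1}\xrightarrow{\delta_0}(\Z^2)^{n+1}\xrightarrow{\delta_1}\Z\to 0,
\]
where $\delta_0$ takes cyclic differences of the restriction $f(a,b,c)=(a+c,b+c)\colon R(\mathcal S_3)\to R(\Z_2)$ and $\delta_1$ is the signed sum around the cycle of $g(u,v)=u+v\colon R(\Z_2)\to R(\{e\})$. Here $\delta_1$ is surjective, so $H^2=0$; a rank count against $\ker f\cong\Z$ gives $H^0=\ker\delta_0\cong\Z^{n+3}$, and the Euler characteristic $3(n+1)-2(n+1)+1=n+2$ forces $\operatorname{rank}H^1=1$, the freeness being confirmed by the final outcome $K^1_W=\Z$. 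Since $K^1_W$ vanishes on orbits, only even rows of the spectral sequence of Theorem \ref{Spectral Sequence Theorem} survive, it collapses, and $K^{\mathrm{even}}_W(\E W)=\Z^{n+3}$, $K^{\mathrm{odd}}_W(\E W)=\Z$.

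For $KO$-theory I would next identify the coefficient systems. Every isotropy group here ($\{e\}$, $\Z_2$, $\mathcal S_3$) has only real-type irreducibles, so $R(H;\C)=R(H;\HH)=0$ and formula (\ref{ecu2}) gives $KO^{q}_H(\pt)\cong KO^q(\pt)\otimes RO(H)$, with the real restrictions. Thus the rows $q\equiv 0,4$ reproduce the integral complex above, the rows $q\equiv 6,7$ give its mod-$2$ reduction (and since the integral Bredon cohomology is free, the Universal Coefficient Theorem leaves no Tor term, yielding $\bar H^0=(\Z_2)^{n+3}$, $\bar H^1=\Z_2$, $\bar H^2=0$), while the rows $q\equiv 1,2,3,5$ vanish.

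The main obstacle is the assembly of this $KO$ spectral sequence. Because $H^1\ne 0$, the $E_2$-page now carries two nonzero columns, so unlike Theorems \ref{amalgamado}, \ref{cox1} and \ref{non1} it does not collapse for purely dimensional reasons: I must control the differentials and extension problems relating adjacent rows using the $KO^*(\pt)$-module structure (the $\eta$- and Bott-class actions) rather than the additive $E_2$-data alone. Reading off $E_\infty^{p,q}$ with $p+q=-n$ should then give the stated groups, with the degrees $n\equiv 2,4,5,6$ receiving a single term and $n\equiv 0$ splitting as $\Z_2\oplus\Z^{n+3}$. The genuinely delicate outputs, which do not appear in the earlier theorems, are the non-split extension $A$ of $(\Z_2)^{n+3}$ by $\Z_2$ in degree $1$ and the surviving $\Z_2$ summands in the odd degrees $n\equiv 3,7$; isolating these, and in particular tracking the fate of the degree-$4$ class in $H^1$, is where the presence of the free $2$-cell of the filled cycle enters decisively and where the hardest part of the argument lies.
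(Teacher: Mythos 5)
Your setup is the same as the paper's: the extra spherical pair $\{s_0,s_n\}$ closes the Bestvina complex into a filled $(n+1)$-gon, the model has $n+1$ vertex orbits with isotropy $\mathcal{S}_3$, $n+1$ edge orbits with isotropy $\Z_2$ and one free $2$-cell orbit, and the Bredon complex $(\Z^3)^{n+1}\to(\Z^2)^{n+1}\to\Z$ with $H^0=\Z^{n+3}$, $H^1$ of rank $1$, $H^2=0$ matches the paper. Two small points on the complex case: your justification that $H^1$ is torsion-free (``confirmed by the final outcome $K^1_W=\Z$'') is circular -- the clean argument, as in the paper, is that surjectivity of $f$ identifies $\operatorname{im}\delta_0$ exactly with the rank-$2n$ direct summand $\{y\in(\Z^2)^{n+1}:\sum_i y_i=0\}$ of $\ker\delta_1$, whence $H^1\cong\Z$ with no torsion.

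The $KO$ part has a genuine gap, located precisely where you defer the work. First, the difficulty you announce is not there: with only the two adjacent columns $p=0,1$ nonzero, every $d_r$ with $r\ge 2$ lands in a zero column, so the spectral sequence collapses at $E_2$ for purely dimensional reasons; there are no differentials to ``control'' via $\eta$ or Bott classes, only a single extension in each total degree. Second, and more seriously, once collapse is granted the answer in every degree is forced by the $E_2$-page you already computed, and in degrees $n\equiv 3,7$ that page gives $KO^{-n}_W(\E W)\cong E_\infty^{1,-n-1}=H^1_W(\E W;KO^{-n-1}_W)$ with $-n-1\equiv 4,0$ (mod $8$), i.e.\ $\Z$, not the $\Z_2$ claimed in the statement. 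So ``reading off $E_\infty$ should then give the stated groups'' is not a step you may wave at: your own (correct) $E_2$-data contradicts the asserted answer in exactly those two residues, and you would need either to locate an error in your coefficient computation or to flag the discrepancy. (The paper's proof carries the same internal tension: its table of Bredon groups has $H^1=\Z$ in the rows $i\equiv 0,4$, which yields $KO^{-3}=KO^{-7}=\Z$, while its $E_2$-figure and the theorem record $\Z_2$ there.) Finally, you describe $A$ as a non-split extension; nothing in your argument (nor in the paper's) decides whether $A\cong(\Z_2)^{n+4}$ or contains a $\Z_4$ summand, so that adjective is unearned.
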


\begin{proof}
In this case a new spherical subset, $\{ s_0,s_n\}$, has appeared and the resulting  Bestvina complex is a polygon of $n+1$ sides.

\begin{figure}[h!]
\centering
\begin{tikzpicture}
\

\draw[thick] (-2,0)--(2,0);
\draw[thick] ({2*cos(60)},{2*sin(60)})--({-2*cos(60)},{2*sin(60)});
\draw[thick] ({2*cos(60)},{2*sin(60)})--({-2*cos(60)},{-2*sin(60)});
\draw[thick] ({2*cos(60)},{-2*sin(60)})--({-2*cos(60)},{-2*sin(60)});
\draw[thick] ({2*cos(60)},{-2*sin(60)})--({-2*cos(60)},{2*sin(60)});
\draw[thick] ({-2*cos(60)},{2*sin(60)})--(-2,0);
\draw[thick] ({2*cos(60)},{2*sin(60)})--(2,0);
\draw[thick] ({-2*cos(60)},{-2*sin(60)})--(-2,0);
\draw[thick] ({2*cos(60)},{-2*sin(60)})--(2,0);

\node[above left] at ({-2*cos(60)},{2*sin(60)}) {$B_{\{ s_0,s_1\}}$};
\node[above right] at ({2*cos(60)},{2*sin(60)}) {$B_{\{ s_1,s_2\}}$};
\node[below left] at ({-2*cos(60)},{-2*sin(60)}) {$B_{\{ s_4,s_5\}}$};
\node[below right] at ({2*cos(60)},{-2*sin(60)}) {$B_{\{ s_3,s_4\}}$};
\node[right] at (2,0) {$B_{\{ s_2,s_3\}}$};
\node[left] at (-2,0) {$B_{\{ s_5,s_0\}}$};

\node[left] at ({-2*cos(30)},{2*sin(30)}) {$B_{\{s_0\}}$};
\node[right] at ({2*cos(30)},{2*sin(30)}) {$B_{\{s_2\}}$};
\node[left] at ({-2*cos(30)},{-2*sin(30)}) {$B_{\{s_5\}}$};
\node[right] at ({2*cos(30)},{-2*sin(30)}) {$B_{\{s_3\}}$};
\node[above] at (0,2) {$B_{\{s_1\}}$};
\node[below] at (0,-2) {$B_{\{s_4\}}$};

\draw[thick, fill=lightgray] (-2,0)--({-2*cos(60)},{2*sin(60)})--({2*cos(60)},{2*sin(60)})--(2,0)--({2*cos(60)},{-2*sin(60)})--({-2*cos(60)},{-2*sin(60)})--(-2,0);
\draw[thick,<-, ,shorten <=4pt] (-2,0)--({-2*cos(60)},{2*sin(60)});
\draw[thick,<-, ,shorten <=4pt] ({-2*cos(60)},{2*sin(60)})--({2*cos(60)},{2*sin(60)});
\draw[thick,<-, ,shorten <=4pt] ({2*cos(60)},{2*sin(60)})--(2,0);
\draw[thick,<-, ,shorten <=4pt] (2,0)--({2*cos(60)},{-2*sin(60)});
\draw[thick,<-, ,shorten <=4pt] ({2*cos(60)},{-2*sin(60)})--({-2*cos(60)},{-2*sin(60)});
\draw[thick,<-, ,shorten <=4pt] ({-2*cos(60)},{-2*sin(60)})--(-2,0);

draw[thick, fill=black] (0,0) circle [radius=0.1];
\draw[thick, fill=black] (2,0) circle [radius=0.1];
\draw[thick, fill=black] (-2,0) circle [radius=0.1];
\draw[thick, fill=black] ({2*cos(60)},{2*sin(60)}) circle [radius=0.1];
\draw[thick, fill=black] ({2*cos(60)},{-2*sin(60)}) circle [radius=0.1];
\draw[thick, fill=black] ({-2*cos(60)},{-2*sin(60)}) circle [radius=0.1];
\draw[thick, fill=black] ({-2*cos(60)},{2*sin(60)}) circle [radius=0.1];

\node at (0,0) {$B_\emptyset$};

\end{tikzpicture}
\caption{Bestvina complex $B$ for the non-right-angled Coxeter group $W$ for $n=5$.}\label{Figure Hexagon 2}
\end{figure}

Analogously to the previous example, the basic construction gives a model  for $\E W$, and its Bredon cohomology is given by:
$$(\Z^3)^{n+1}\xrightarrow{\delta_0} (\Z^2)^{n+1}\xrightarrow{\delta_1} \Z$$
where $\delta_1$ is clearly surjective and $\delta_0$ is obtained from the transpose of the boundary map $\partial:\Z^{n+1}\to \Z^{n+1}$, given by the following matrix
$$\qty(
\begin{array}{rrrrrrrr}
1 & -1& 0          && \\
0 & 1 & -1 & 0 &  \\
   & 0 & 1  & -1        &0&\\
   &   &\ddots & \ddots & \ddots &\ddots &\\
   &&& 0 &1& -1& 0 \\
0 &&&&0 &1&-1\\
-1&0&&&&0&1\\
\end{array}
)
$$
and substituting the numbers $\pm 1$ by the function $\pm f$ defined above. The image of $\partial$ is the subgroup $H=\{(x_0,\dots, x_n)\in \Z^{n+1}\mid x_0+x_1+\dots+x_n=0\}\subset \Z^{n+1}$ and, since the map $f$ is surjective, the image of $\delta_0$ is isomorphic to $\Z^{2n}$. Therefore, the non-trivial Bredon cohomology groups are
$$H^0_W(\E W; K_W^0)=\Z^{n+3},\;\; H^1_W(\E W;K_W^0)=\Z.$$

Now, the spectral sequence have two non-trivial columns but all differential have to be trivial and the first statement follows.

Finally, for the equivariant $KO$-theory, recall that the endomorphism field is $\R$ for  all the real irreducible representation of these spherical subgroups. Hence, the Bredon cohomology with coefficient system $KO^{-i}_W$ can be easily computed used the information above and the Universal Coefficient Theorem:

\begin{table}[ht]
\centering
\begin{tabular}{c |  c  c  c  c  c  c  c  c}
$i$ & 0 &1 &2&$\;3\;$&$\;4\;$&$\;5\;$&$\;6\;$&$\;7\;$\\
[1ex]
\hline
\\
$H^{0}_{W}(\E W; KO_W^{-i})$  & $ \Z^{n+3}$ & $(\Z_2)^{n+3}$ & $(\Z_2)^{n+3}$ & 0 & $\Z^{n+3}$& 0&0&0\\
$H^1_W(\E W; KO_W^{-i})$ & $\Z$ & $\Z_2$ &  $\Z_2$ & 0 & $\Z$ &0&0&0
\end{tabular}
\end{table}

The $E_2$-page of the Atiyah-Hirzebruch spectral sequence is represented in the figure \ref{Figure spectral sequence x}. Note that there are not non-trivial differentials, so the spectral sequence trivially collapses and there is an extension problem in degree -1.

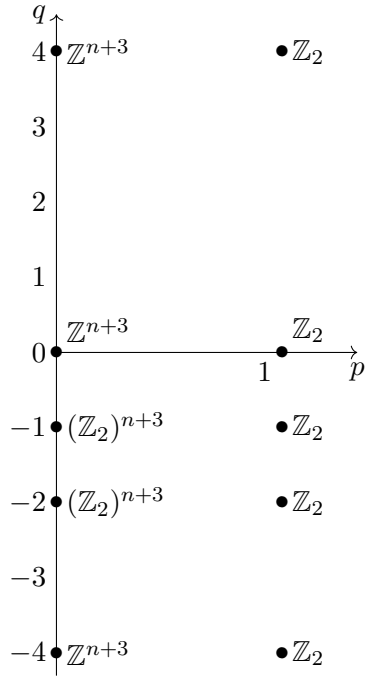
\begin{figure}[h!]
\centering
\begin{tikzpicture}
\draw[->] (0,-4.3)--(0,4.5);
\draw[->] (0,0)--(4,0);

\node[left] at (0,4) {$4$};
\node[left] at (0,3) {$3$};
\node[left] at (0,2) {$2$};
\node[left] at (0,1) {$1$};
\node[left] at (0,0) {$0$};
\node[left] at (0,-1) {$-1$};
\node[left] at (0,-2) {$-2$};
\node[left] at (0,-3) {$-3$};
\node[left] at (0,-4) {$-4$};

\node[right] at (0,4) {$\Z^{n+3}$};
\node[above right] at (0,0) {$\Z^{n+3}$};
\node[right] at (0,-1) {$(\Z_2)^{n+3}$};
\node[right] at (0,-2) {$(\Z_2)^{n+3}$};
\node[right] at (0,-4) {$\Z^{n+3}$};

\node at (0,4) {$\bullet$};
\node at (0,0) {$\bullet$};
\node at (0,-1) {$\bullet$};
\node at (0,-2) {$\bullet$};
\node at (0,-4) {$\bullet$};

\node[right] at (3,4) {$\Z_2$};
\node[above right] at (3,0) {$\Z_2$};
\node[right] at (3,-1) {$\Z_2$};
\node[right] at (3,-2) {$\Z_2$};
\node[right] at (3,-4) {$\Z_2$};

\node at (3,4) {$\bullet$};
\node at (3,0) {$\bullet$};
\node at (3,-1) {$\bullet$};
\node at (3,-2) {$\bullet$};
\node at (3,-4) {$\bullet$};

\node[below left] at (3,0) {$1$};

\node[left] at (0,4.5) {$q$};
\node[below] at (4,0) {$p$};

\end{tikzpicture}
\caption{First page of the spectral sequence, $E_2^{p,q}=H_W^p(\E W;KO_W^q)$.}\label{Figure spectral sequence x}
\end{figure}
\end{proof}

As explained in the introduction, this method can be applied to any Coxeter Group. For example, a similar computation for the infinite Coxeter Group $W=\tilde{A}_3$, gives as result $K^{\text{even}}_W(\E W)=\Z^9$ and $K^{\text{odd}}_W(\E W)=\Z^2$; for the group $W=\tilde{D}_4$, we get $K^{\text{even}}_W(\E W)=\Z^{33}$ and $K^{\text{odd}}_W(\E W)=0$. 
However, the models of such groups are more complicated ( of dimension 3 and 4 respectively), the representation theory involved is more difficult and the use of computers is necessary.

In particular, for these computations, the following software has been used: the system for computational discrete algebra, GAP (4.9.1), gives a list of irreducible representations of a finite group, with the commands \texttt{CharacterDegrees} and \texttt{IrreducibleRepresentations}. In order to obtain the finite subgroups of the Coxeter Groups previously mentioned, the CHEVIE project \cite{CH} has been used as a package of GAP: this software allows to get the character table of any finite Coxeter group, which is the first step in order to obtain the irreducible representations. Finally, using the Smith Normal Form with the software \texttt{MATLAB}, the Bredon cohomology can be computed from the matrices obtained in the previous step. 
Therefore, from a model of $\E W$ we can get the Bredon cohomology.

 When more complicated groups are considered (except in the case of having enough exponents $m_{ij}=\infty$ as in the examples of the previous section), the complexity of the models increases substantially, making the algorithm inefficient. Furthermore, in the examples of the previous section, extension problems associated to the spectral sequence have rarely appeared; when the dimension of the models grows, there are more non-trivial columns in the spectral sequence, and the extensions problems become frequent.



\textsc{Department of Algebra, Geometry and Topology, University of Malaga, Malaga 29080, Spain}

\textit{ Email address}: \texttt{mario.fuentes@uma.es}

\end{document}